
\documentclass{amsart}
\usepackage{amsmath}
\usepackage{amssymb}
\usepackage{amsthm}
\usepackage[msc-links,abbrev]{amsrefs}
\usepackage{hyperref}
\usepackage[noabbrev,capitalize]{cleveref}
\usepackage{mathrsfs}
\usepackage{mathtools}
\usepackage{slashed}
\usepackage{tikz-cd}
\usepackage{enumitem}

\setenumerate{label=(\roman*)}

\DeclareMathOperator{\im}{im}

\DeclareMathOperator{\sgn}{sgn}

\newcommand{\defn}[1]{{\boldmath\bfseries#1}}

\newcommand{\sha}{\nu}

\newcommand{\om}{\overline{m}}

\newcommand{\comega}{\widetilde{\omega}}

\newcommand{\lv}{\lvert}
\newcommand{\rv}{\rvert}



\newcommand{\mL}{\mathcal{L}}

\newcommand{\bN}{\mathbb{N}}

\newcommand{\bZ}{\mathbb{Z}}

\newcommand{\sA}{\mathscr{A}}

\newcommand{\sR}{\mathscr{R}}

\newcommand{\sW}{\mathscr{W}}



\def\sideremark#1{\ifvmode\leavevmode\fi\vadjust{\vbox to0pt{\vss
 \hbox to 0pt{\hskip\hsize\hskip1em
 \vbox{\hsize3cm\tiny\raggedright\pretolerance10000
 \noindent #1\hfill}\hss}\vbox to8pt{\vfil}\vss}}}


\newcommand{\suchthatcolon}{\mathrel{}:\mathrel{}}

\newtheorem{theorem}{Theorem}[section]
\newtheorem{proposition}[theorem]{Proposition}
\newtheorem{lemma}[theorem]{Lemma}
\newtheorem{corollary}[theorem]{Corollary}

\theoremstyle{definition}

\theoremstyle{remark}

\numberwithin{equation}{section}

\usepackage{tikz-cd}
\usetikzlibrary{arrows}

\begin{document}

\title{A simple construction of the Rumin algebra}
\author{Jeffrey S. Case}
\address{Department of Mathematics \\ Penn State University \\ University Park, PA 16802 \\ USA}
\email{jscase@psu.edu}
\keywords{Rumin complex; Rumin algebra; contact invariant}
\subjclass[2020]{Primary 53D10; Secondary 58A10, 58J10}
\begin{abstract}
 The Rumin algebra of a contact manifold is a contact invariant $C_\infty$-algebra of differential forms which computes the de Rham cohomology algebra.
 We recover this fact by giving a simple and explicit construction of the Rumin algebra via Markl's formulation of the Homotopy Transfer Theorem. 
\end{abstract}
\maketitle

\section{Introduction}
\label{sec:intro}

A \defn{contact manifold} is a pair $(M^{2n+1},\xi)$ of a real $(2n+1)$-dimensional manifold and a subbundle $\xi \subset TM$ of rank $2n$ with the property that if $\theta$ is a local one-form such that $\ker\theta = \xi$, then $\theta \wedge d\theta^n \not=0$.
The \defn{Rumin complex} is a cochain complex which is adapted to the contact structure and computes the de Rham cohomology \emph{groups}.
There are many constructions of the Rumin complex, including Rumin's original construction via subquotients of differential forms~\cite{Rumin1990}, constructions via a spectral sequence~\cites{Rumin2000,Julg1995,BryantEastwoodGoverNeusser2019}, and a realization as a deformation retract of the de Rham complex~\cites{Rumin2005,Case2021rumin,CalderbankDiemer2001}.
Importantly, the latter construction yields, as a consequence of the Homotopy Transfer Theorem~\cites{Kadeishvili1980,LodayVallette2012,Markl2006}, a $C_\infty$-structure on the Rumin complex which recovers the de Rham cohomology \emph{algebra}.

The \defn{Rumin algebra} is a specific contact invariant $C_\infty$-algebra of differential forms which computes the de Rham cohomology algebra;
indeed, its higher products $m_k$, $k \geq 4$, all vanish.
That such a $C_\infty$-algebra exists was first observed by Calderbank and Diemer~\cite{CalderbankDiemer2001} as a specialization of their work on curved Bernstein--Gelfand--Gelfand sequences.
This observation is also implicit in a later approach of Rumin~\cite{Rumin2005} which applies more generally to Carnot--Carath\'eodory structures.
Calderbank--Diemer and Rumin both found a contact invariant deformation retract of the de Rham algebra, but did not explicitly compute the resulting $C_\infty$-structure.
The explicit form of the Rumin algebra was computed by Case~\cite{Case2021rumin} in an ad hoc manner which resembles a construction of Tsai, Tseng and Yau~\cite{TsaiTsengYau2016} on symplectic manifolds.

In this short note we present a clear and simple construction of the Rumin algebra.
The key ingredients are a natural contact invariant deformation retract of the de Rham algebra and an explicit formula~\cite{Markl2006} for the transfer of the Homotopy Transfer Theorem.
This approach gives more explicit formulas for the spaces and deformation retracts identified by Calderbank--Diemer~\cite{CalderbankDiemer2001} and Rumin~\cite{Rumin2005}, and explains the formulas of Case~\cite{Case2021rumin}.

Our construction shows that the Rumin algebra is homotopy equivalent, as a $C_\infty$-algebra, to the de Rham algebra.
Recall that two simply connected manifolds of finite type are rationally homotopy equivalent if and only if their spaces of polynomial differential forms are homotopy equivalent as $C_\infty$-algebras~\cites{FelixHalperinThomas2001,Kadeishvili2009,Sullivan1975,DeligneGriffithsMorganSullivan1975};
and that the homotopy type, as a $C_\infty$-algebra, of the de Rham algebra has many implications for the diffeomorphism type of a closed, simply connected, smooth manifold~\cite{Sullivan1977}.
For these reasons, we expect the Rumin algebra to be a fundamental invariant of contact manifolds.

This note is organized as follows.

In \cref{sec:algebra} we recall some basic facts about $C_\infty$-algebras, including Markl's version~\cite{Markl2006} of the Homotopy Transfer Theorem.

In \cref{sec:contact} we recall some basic facts about contact manifolds and the symplectic structure on their contact distribution.

In \cref{sec:complex} we construct the Rumin algebra.

\section{$C_\infty$-algebras}
\label{sec:algebra}

In this section we discuss some basic facts about $C_\infty$-algebras, which are the commutative versions of $A_\infty$-algebras.
We follow the conventions of Keller's survey article on the latter~\cite{Keller2001}.

Let $A = \bigoplus_{k \in \bZ} A^k$ be a $\bZ$-graded vector space.
A \defn{homogeneous element of $A$} is an element $\omega \in A^k$ for some $k \in \bZ$.
In this case we call $k$ the \defn{degree} of $\omega$ and we set $\lv \omega \rv := k$.
A map $f \colon A^{\otimes k} \to A$ is \defn{homogeneous of degree $\ell$} if
\begin{equation*}
 f( A^{i_1} \otimes \dotsm \otimes A^{i_k} ) \subset A^{i_1 + \dotsm + i_k + \ell}
\end{equation*}
for all $i_1, \dotsc, i_k \in \bZ$.
In this case we set $\lv f \rv := \ell$.

An \defn{$A_\infty$-algebra} is a pair $(A,m)$ of a $\bZ$-graded vector space $A$ and a collection $m = \{ m_k \}_{k\in \bN}$ of homogeneous operators $m_k \colon A^{\otimes k} \to A$ of degree $2-k$ such that
\begin{equation}
 \label{eqn:stasheff-relations}
 \sum_{r+s+t=n} (-1)^{r+st}m_{r+t+1}( 1^{\otimes r} \otimes m_s \otimes 1^{\otimes t}) = 0
\end{equation}
for all $n \in \bN$, where $1 \colon A \to A$ is the identity map and we use the Koszul sign convention
\begin{equation*}
 (f \otimes g)(\alpha \otimes \beta) := (-1)^{\lv g\rv \lv \alpha\rv} f(\alpha) \otimes g(\beta)
\end{equation*}
for homogeneous maps $f,g \colon A \to A$ and homogeneous elements $\alpha,\beta \in A$.
The first three cases of~\eqref{eqn:stasheff-relations} are
\begin{align*}
 m_1m_1 & = 0 , \\
 m_1m_2 & = m_2(m_1 \otimes 1 + 1 \otimes m_1) , \\
 m_1m_3 & = m_2(1 \otimes m_2 - m_2 \otimes 1) - m_3(m_1 \otimes 1^{\otimes 2} + 1 \otimes m_1 \otimes 1 + 1^{\otimes 2} \otimes m_1) .
\end{align*}
In particular, if $(A,m)$ is an $A_\infty$-algebra, then $(A,m_1)$ is a cochain complex.
\defn{Graded (associative) algebras} are $A_\infty$-algebras with $m_k=0$ for all $k\not=2$.
\defn{Differential graded algebras} are $A_\infty$-algebras with $m_k=0$ for all $k \geq 3$.
The \defn{cohomology ring} $(HA,[m_2])$ of $(A,m)$ is the graded algebra
\begin{align*}
 HA^k & := \frac{\ker m_1 \cap A^k}{\im m_1 \cap A^k} , \\
 [m_2]( [\omega_1] \otimes [\omega_2] ) & := [ m_2( \omega_1 \otimes \omega_2 ) ] .
\end{align*}

Given $A_\infty$-algebras $(A,m)$ and $(B,\om)$, an \defn{$A_\infty$-morphism} $f \colon (A,m) \to (B,\om)$ is a collection of homogeneous operators $f_k \colon A^{\otimes k} \to B$ of degree $1-k$ such that
\begin{align*}
 \sum_{r+s+t = n} (-1)^{r+st}f_{r+t+1}( 1^{\otimes r} \otimes m_s \otimes 1^{\otimes t}) & = \sum_{\substack{1 \leq r \leq n \\ i_1 + \dotsm + i_r = n}} (-1)^\ell \om_r (f_{i_1} \otimes \dotsm \otimes f_{i_r} ) , \\
 \ell & := \sum_{j=1}^{r} (r-j)(i_j - 1)
\end{align*}
for all $n \in \bN$.
Specializing to the cases $n\in\{1,2\}$ yields
\begin{align*}
 f_1m_1 & = \om_1 f_1 , \\
 f_1m_2 & = \om_2(f_1 \otimes f_1) + \om_1 f_2 + f_2(m_1 \otimes 1 + 1 \otimes m_1) .
\end{align*}
In particular, $f_1 \colon (A,m_1) \to (B,\om_1)$ is a \defn{cochain map}.
We call $f$ an \defn{$A_\infty$-quasi-isomorphism} if $[f_1] \colon HA \to HB$, $[f_1]([\alpha]) := [f_1(\alpha)]$, is an isomorphism of graded algebras.

Given $p,q \in \bN$, a \defn{$(p,q)$-shuffle} is an element $\sigma \in S_{p+q}$ of the permutation group on $\{ 1, \dotsc, p+q \}$ such that
\begin{equation*}
 \sigma(1) < \dotsm < \sigma(p) , \quad \sigma(p+1) < \dotsm < \sigma(p+q) .
\end{equation*}
Denote by $Sh_{p,q} \subset S_{p+q}$ the subset of $(p,q)$-shuffles.
The \defn{$(p,q)$-shuffle product} $\sha_{p,q} \colon A^{\otimes (p+q)} \to A^{\otimes (p+q)}$ is
\begin{equation*}
 \sha_{p,q}( x_1 \otimes \dotsm \otimes x_{p+q} ) := \sum_{\sigma \in Sh_{p,q}} \sgn(\sigma)\epsilon(\sigma;x_1,\dotsc,x_{p+q}) x_{\sigma^{-1}(1)} \otimes \dotsm \otimes x_{\sigma^{-1}(p+q)} ,
\end{equation*}
where $\epsilon(\sigma;x_1,\dotsc,x_{p+q})$ is the Koszul sign determined by
\begin{equation*}
 x_1 \wedge \dotsm \wedge x_{p+q} = \sum_{\sigma \in S_{p+q}} \epsilon(\sigma;x_1,\dotsc,x_{p+q}) x_{\sigma^{-1}(1)} \otimes \dotsm \otimes x_{\sigma^{-1}(p+q)} 
\end{equation*}
on homogeneous elements.
For example, if $\omega,\tau,\eta \in A$ are homogeneous, then
\begin{align*}
 \sha_{1,1}(\omega \otimes \tau) & = \omega \otimes \tau - (-1)^{\lv\omega\rv\lv\tau\rv} \tau \otimes \omega , \\
 \sha_{1,2}(\omega \otimes \tau \otimes \eta) & = \omega \otimes \tau \otimes \eta - (-1)^{\lv\omega\rv\lv\tau\rv} \tau \otimes \omega \otimes \eta + (-1)^{\lv\omega\rv(\lv\tau\rv + \lv\eta\rv)} \tau \otimes \eta \otimes \omega , \\
 \sha_{2,1}(\omega \otimes \tau \otimes \eta) & = \omega \otimes \tau \otimes \eta - (-1)^{\lv\tau\rv\lv\eta\rv} \omega \otimes \eta \otimes \tau + (-1)^{\lv\eta\rv(\lv\omega\rv + \lv\tau\rv)} \eta \otimes \omega \otimes \tau .
\end{align*}

A \defn{$C_\infty$-algebra} is an $A_\infty$-algebra $(A,m)$ such that
\begin{equation*}
 m_{p+q} \circ \sha_{p,q} = 0
\end{equation*}
for all $p,q \in \bN$.
\defn{Graded commutative (associative) algebras} are $C_\infty$-algebras with $m_k=0$ for all $k\not=2$.
\defn{Commutative differential graded algebras} are $C_\infty$-algebras with $m_k=0$ for all $k\geq3$.

Given $C_\infty$-algebras $(A,m)$ and $(B,\om)$, a \defn{$C_\infty$-morphism} $f \colon (A,m) \to (B,\om)$ is an $A_\infty$-morphism such that
\begin{equation*}
 f_{p+q} \circ \sha_{p,q} = 0
\end{equation*}
for all $p,q \in \bN$.
We call $f$ a \defn{$C_\infty$-quasi-isomorphism} if $[f_1] \colon HA \to HB$ is an isomorphism of graded commutative algebras.

The Homotopy Transfer Theorem~\cite{LodayVallette2012}*{Theorem~10.3.1} constructs a $C_\infty$-structure on a deformation retract of a commutative differential graded algebra and an extension of the inclusion to a $C_\infty$-quasi-isomorphism.
We require explicit formulas for the transferred structure and induced quasi-isomorphism.

\begin{theorem}
 \label{homotopy-transfer}
 Let $(A,d,\mu)$ be a commutative differential graded algebra and let $(B,d)$ be a subcomplex of $(A,d)$.
 Suppose that
 \begin{equation}
  \label{eqn:general-homotopy}
  \begin{tikzcd}
   ( A, d ) \ar[loop left, "h"] \ar[r, shift left, "\pi"] & ( B, d ) \ar[l, shift left, "i"] 
  \end{tikzcd}
 \end{equation}
 is a \defn{deformation retract};
 i.e.\ $\pi \colon (A,d) \to (B,d)$ and $i \colon (B,d) \to (A,d)$ are cochain maps, $h \colon A \to A$ is homogeneous of degree $-1$, and $i \pi = 1_A - dh - hd$ and $\pi i = 1_B$.
 Recursively define $\psi_n \colon A^{\otimes n} \to A$, $n \geq 2$, by
 \begin{equation*}
  \psi_n := \sum_{s + t = n} (-1)^{s+1} \mu( h\psi_{s} \otimes h\psi_{t} ) ,
 \end{equation*}
 with the convention $h\psi_1 = -1_A$.
 Set $m_1 := d$ and $m_k := \pi\psi_k i^{\otimes k}$, $k \geq 2$, and $f_k := -h\psi_k i^{\otimes k}$, $k \in \bN$.
 Then $(B,m)$ is a $C_\infty$-algebra and $f \colon (B,m) \to (A,d,\mu)$ is a $C_\infty$-quasi-isomorphism with $f_1=i$.
\end{theorem}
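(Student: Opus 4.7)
The plan is to verify three families of identities by induction on the arity $n$: the Stasheff ($A_\infty$) relations for the transferred operations $m_k$, the shuffle-vanishing condition that upgrades $(B,m)$ to a $C_\infty$-algebra, and the defining identities for $f$ to be a $C_\infty$-morphism. The quasi-isomorphism claim is then immediate: the deformation retract identity $i\pi - 1_A = -(dh+hd)$ implies that $i = f_1$ induces an isomorphism on cohomology, which is automatically an isomorphism of graded commutative algebras once $f$ has been shown to be a $C_\infty$-morphism.

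First I would establish the tree interpretation of $\psi_n$: unfolding the recursion exhibits $\psi_n$ as a signed sum over planar binary rooted trees with $n$ leaves, where each internal vertex is decorated by $\mu$, each internal edge by $h$, each leaf by the identity, and the root carries no $h$; the sign of a tree is built multiplicatively from the $(-1)^{s+1}$ factors at its vertices. This expansion is the conceptual skeleton for all later bookkeeping and exhibits $m_k$ and $f_k$ as close siblings, differing only in what is attached at the root.

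The central computation is to apply $d$ to the recursion, use the Leibniz rule $d\mu = \mu(d\otimes 1 + 1\otimes d)$, and substitute the homotopy identity $dh + hd = 1_A - i\pi$. After cancellation this yields a \emph{master identity} of the schematic form
\begin{equation*}
 d\psi_n - \sum_{r+1+t=n} (-1)^{r}\,\psi_n\bigl(1^{\otimes r}\otimes d\otimes 1^{\otimes t}\bigr) = \sum_{\substack{s+t=n \\ s,t\geq 2}} (-1)^{\epsilon(s,t)}\,\psi_{*}\bigl(\cdots\otimes i\pi\psi_{*}\otimes\cdots\bigr),
\end{equation*}
with explicit but intricate signs. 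Composing on the left with $\pi$ and on the right with $i^{\otimes n}$ and using $\pi i = 1_B$ together with the inductive identification $\pi\psi_\ell i^{\otimes \ell} = m_\ell$ produces the $n$-th Stasheff relation for $m$; composing instead on the left with $-h$ and on the right with $i^{\otimes n}$ produces the $n$-th $A_\infty$-morphism identity for $f$, via $-h\psi_\ell i^{\otimes \ell} = f_\ell$.

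For the shuffle-vanishing conditions $m_{p+q}\circ\sha_{p,q}=0$ and $f_{p+q}\circ\sha_{p,q}=0$, I would pair each tree in the expansion of $\psi_{p+q}$ with the shuffle terms that meet it at the root. Graded commutativity of $\mu$ implies that transposing the two subtrees at an internal vertex modifies the algebraic contribution by exactly the Koszul sign produced by the corresponding shuffle permutation, so the terms in $\psi_{p+q}\circ\sha_{p,q}$ pair up and cancel. The principal obstacle I anticipate is sign bookkeeping: matching the $(-1)^{s+1}$ coming from the recursion, the Koszul signs $(-1)^{r+st}$ in the Stasheff relations, and the $(-1)^\ell$ in the $A_\infty$-morphism identities. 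I would handle it by the standard device of assigning a canonical sign to each planar binary tree and then checking that each elementary local move (splitting an edge, transposing two subtrees) produces precisely the sign required by the target identity.
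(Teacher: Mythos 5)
Your proposal is correct in outline but takes a genuinely different, and much longer, route than the paper. The paper does not prove the $A_\infty$ part at all: it cites Markl's Theorem~5 of \cite{Markl2006}, which is exactly the statement that $(B,m)$ with $m_k = \pi\psi_k i^{\otimes k}$ is an $A_\infty$-algebra and that $f_k = -h\psi_k i^{\otimes k}$ assembles into an $A_\infty$-quasi-isomorphism, and then disposes of the upgrade from $A_\infty$ to $C_\infty$ with the remark that it follows from ``a straightforward induction'' using graded commutativity of $\mu$. What you propose is essentially to re-derive Markl's theorem from scratch (the tree expansion, the master identity obtained by applying $d$ to the recursion and substituting $dh+hd = 1_A - i\pi$, then post-composing with $\pi$ or $-h$), together with the shuffle-cancellation argument for the $C_\infty$ conditions, which is the content of Cheng--Getzler's refinement of the transfer theorem to the commutative setting. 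Your approach buys self-containedness and makes visible \emph{why} commutativity of $\mu$ forces $m_{p+q}\circ\sha_{p,q}=0$; the paper's approach buys brevity by outsourcing the hard combinatorics to the literature. Both are legitimate.

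Two cautions if you intend to execute the plan. First, your master identity is mis-indexed as written: replacing an internal edge's $h$ by $i\pi$ cuts the tree at an arbitrary internal edge, so the right-hand side must run over all insertions $\psi_{r+1+t}\bigl(1^{\otimes r}\otimes i\pi\psi_s\otimes 1^{\otimes t}\bigr)$ with $r+s+t=n$ and $s\geq 2$, not only over root-level splittings $s+t=n$; moreover the $1_A$ half of $1_A - i\pi$ produces contracted trees with a ternary vertex, and these cancel in pairs only by \emph{associativity} of $\mu$, which your sketch does not mention. Second, the shuffle cancellation is not a single root-level transposition: for a fixed tree the $(p,q)$-shuffles distribute the two blocks of inputs over the two subtrees at the root in all possible ways, and you need an induction in which the ``mixed'' distributions vanish by the inductive hypothesis applied to the subtrees while the two ``pure'' distributions cancel against each other by commutativity of $\mu$. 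With those repairs the argument goes through and reproves the cited results.
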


\begin{proof}
 Markl computed~\cite{Markl2006}*{Theorem~5} that $(B,m)$ is an $A_\infty$-algebra and that $f \colon (B,m) \to (A,d,\mu)$ is an $A_\infty$-quasi-morphism.
 A straightforward induction implies that they are a $C_\infty$-algebra and a $C_\infty$-quasi-isomorphism, respectively.
\end{proof}

\section{The Lefschetz operator on a contact manifold}
\label{sec:contact}

Let $(M^{2n+1},\xi)$ be a contact manifold.
Locally there exists a \defn{contact form};
i.e.\ a real one-form $\theta$ with kernel $\xi$.
We say that $(M^{2n+1},\xi)$ is \defn{coorientable} if a global contact form exists.

Denote by $\sA^k$ the (real) vector space of differential $k$-forms on $M^{2n+1}$ and denote by $\sA_0^k$ the space of \defn{vertical forms};
i.e.\ $\sA_0^k \subset \sA^k$ is the subspace annihilated by taking the exterior product with any local contact form.
We require the following simple observation about the exterior derivative on $\sA_0^k$.

\begin{lemma}
 \label{dsA}
 Let $\theta$ be a local contact form on a contact manifold $(M^{2n+1},\xi)$.
 If $\omega \in \sA_0^k$, then $\theta \wedge d\omega = \omega \wedge d\theta$ wherever $\theta$ is defined.
\end{lemma}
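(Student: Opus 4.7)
The plan is extremely short: differentiate the defining identity of vertical forms. By definition, $\omega \in \sA_0^k$ means that $\theta \wedge \omega = 0$ wherever $\theta$ is defined (indeed, this is the content of ``annihilated by taking the exterior product with any local contact form''). Applying the exterior derivative to both sides and using the graded Leibniz rule yields
\begin{equation*}
 0 = d(\theta \wedge \omega) = d\theta \wedge \omega - \theta \wedge d\omega ,
\end{equation*}
so $\theta \wedge d\omega = d\theta \wedge \omega$.

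To finish, I would commute the factor $d\theta$ past $\omega$. Since $d\theta$ has even degree, $d\theta \wedge \omega = \omega \wedge d\theta$ with no sign, giving the claimed identity $\theta \wedge d\omega = \omega \wedge d\theta$.

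There is no real obstacle here; the only step requiring a moment's care is the sign in the Leibniz rule (which works out because $\theta$ has odd degree $1$) and the commutativity $d\theta \wedge \omega = \omega \wedge d\theta$ (which holds because $d\theta$ has even degree $2$, so the Koszul sign is $(-1)^{2k} = 1$). I would present the argument as a single display with a one-line commentary.
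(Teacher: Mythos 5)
Your proof is correct, and it takes a slightly different (and arguably cleaner) route than the paper. The paper's proof uses the pointwise divisibility fact that a form annihilated by the nowhere-vanishing one-form $\theta$ can be written as $\omega = \theta \wedge \tau$ for some $\tau \in \sA^{k-1}$, and then computes $\theta \wedge d(\theta \wedge \tau) = \theta \wedge \tau \wedge d\theta$ directly. You instead differentiate the defining identity $\theta \wedge \omega = 0$ and apply the graded Leibniz rule, which sidesteps the divisibility step entirely; the sign bookkeeping you flag ($(-1)^{|\theta|} = -1$ in the Leibniz rule, and $d\theta \wedge \omega = \omega \wedge d\theta$ because $d\theta$ has even degree) is exactly right. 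Both arguments are one-line computations; yours requires marginally less input (no need to know that annihilation by $\theta$ implies divisibility by $\theta$), while the paper's explicit factorization $\omega = \theta \wedge \tau$ is a representation that gets reused implicitly elsewhere (e.g.\ in the discussion of $\sA_0^k$ in \cref{baby-lefschetz}), so the paper's choice is a matter of consistency rather than necessity.
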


\begin{proof}
 Since $\omega \in \sA_0^k$, it holds that $\omega = \theta \wedge \tau$ for some $\tau \in \sA^{k-1}$.
 Therefore
 \begin{equation*}
  \theta \wedge d\omega = \theta \wedge d(\theta \wedge \tau) = \theta \wedge \tau \wedge d\theta = \omega \wedge d\theta . \qedhere
 \end{equation*}
\end{proof}

Suppose that $(M^{2n+1},\xi)$ is coorientable.
Given a choice of contact form $\theta$, the \defn{Lefschetz operator} $\mL_\theta \colon \sA_0^k \to \sA_0^{k+2}$ is
\begin{equation*}
 \mL_\theta\omega := \omega \wedge d\theta .
\end{equation*}
The restriction to vertical forms ensures that if $u \in C^\infty(M)$ and $\omega \in \sA_0^k$, then
\begin{equation}
 \label{eqn:lefschetz-transformation}
 \mL_{e^u\theta}\omega = e^u \mL_\theta \omega .
\end{equation}

The Lefschetz operator inherits many properties from the symplectic form $d\theta\rv_\xi$.
For example, its powers are isomorphisms when suitably restricted.

\begin{lemma}
 \label{baby-lefschetz}
 Let $(M^{2n+1},\xi)$ be a coorientable contact manifold with global contact form $\theta$.
 If $k \leq n$, then $\mL_\theta^{k} \colon \sA_0^{n-k+1} \to \sA_0^{n+k+1}$ is an isomorphism.
\end{lemma}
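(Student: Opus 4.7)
The plan is to reduce fiberwise to the classical symplectic Hard Lefschetz theorem applied to the symplectic vector bundle $(\xi,d\theta\rv_\xi)$; nondegeneracy of $d\theta\rv_\xi$ is just a reformulation of the contact condition $\theta\wedge(d\theta)^n\neq 0$. Since $\mL_\theta^k$ is $C^\infty(M)$-linear, it suffices to check that the corresponding bundle map is a fiberwise isomorphism.

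First, I would give a pointwise description of $\sA_0^j$. At each $p\in M$, pick any $R_p\in T_pM$ with $\theta_p(R_p)=1$; this yields a splitting $T_p^\ast M = \langle\theta_p\rangle \oplus \xi_p^\ast$, where $\xi_p^\ast := \{\alpha\in T_p^\ast M : \alpha(R_p)=0\}$ is canonically identified with the dual of $\xi_p$. Using the induced decomposition $\Lambda^j T_p^\ast M = \Lambda^j \xi_p^\ast \oplus \theta_p\wedge\Lambda^{j-1}\xi_p^\ast$, a direct check shows $(\sA_0^j)_p = \theta_p\wedge\Lambda^{j-1}\xi_p^\ast$, so that
\[
  \Phi_p \colon \Lambda^{j-1}\xi_p^\ast \longrightarrow (\sA_0^j)_p, \qquad \alpha \mapsto \theta_p\wedge\alpha,
\]
is a linear isomorphism.

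Second, I would identify $\mL_\theta$ under $\Phi$. Decomposing $d\theta_p = \omega_0 + \theta_p\wedge\beta$ with $\omega_0\in\Lambda^2\xi_p^\ast$ and $\beta\in\xi_p^\ast$, the $\theta_p\wedge\theta_p$ term vanishes and one computes $\Phi_p(\alpha)\wedge d\theta_p = \theta_p\wedge\alpha\wedge\omega_0 = \Phi_p(\alpha\wedge\omega_0)$. Hence $\mL_\theta$ corresponds via $\Phi$ to the Lefschetz map $L_{\omega_0}\colon\alpha\mapsto\alpha\wedge\omega_0$ on $\Lambda^\bullet\xi_p^\ast$, and $\mL_\theta^k$ corresponds to $L_{\omega_0}^k$. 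Note that $\omega_0 = d\theta\rv_{\xi_p}$ is symplectic since $\theta\wedge(d\theta)^n = \theta_p\wedge\omega_0^n\neq 0$ forces $\omega_0^n\neq 0$.

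Finally, I would invoke the symplectic Hard Lefschetz theorem: for a symplectic vector space $(V^{2n},\omega)$ and $k\leq n$, the map $\Lambda^{n-k}V^\ast \to \Lambda^{n+k}V^\ast$ given by $\alpha\mapsto\alpha\wedge\omega^k$ is an isomorphism. Applied fiberwise to $(\xi_p,d\theta\rv_{\xi_p})$, this yields that $\mL_\theta^k$ is a fiberwise, hence an everywhere, isomorphism. No real obstacle is expected: the main content is the classical symplectic Hard Lefschetz theorem, and the remainder is bookkeeping translating between vertical forms on $M$ and forms on $\xi$.
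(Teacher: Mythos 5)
Your proposal is correct and takes essentially the same route as the paper: both identify $\sA_0^{j}$ with (sections of) $\Lambda^{j-1}\xi^\ast$ via $\alpha \mapsto \theta\wedge\alpha$, observe that under this identification $\mL_\theta$ becomes wedging with the symplectic form $d\theta\rv_\xi$, and conclude by the linear-algebraic Hard Lefschetz isomorphism $\Lambda^{n-k}\xi^\ast \to \Lambda^{n+k}\xi^\ast$ (the paper cites Bryant--Griffiths--Grossman, Proposition~1.1, for this). The only cosmetic difference is that the paper uses the Reeb vector field to fix the splitting of $T^\ast M$ globally, whereas you choose an arbitrary pointwise complement, which suffices equally well.
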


\begin{proof}
 Denote by $\sW^k$ the vector space of smooth sections of $\Lambda^k\xi^\ast$.
 Given $\omega \in \sW^k$, denote by $\comega_\theta$ the unique element of $\sA^k$ such that $\comega_\theta\rv_\xi = \omega$ and $\comega_\theta(T_\theta,\cdot) = 0$, where $T_\theta$ is the Reeb vector field determined by $\theta$.
 Then the map
 \begin{equation*}
  \sW^k \ni \omega \mapsto \theta \wedge \comega_\theta \in \sA_0^{k+1}
 \end{equation*}
 is an isomorphism.
 Since $d\theta\rv_\xi$ is a symplectic form, $(d\theta\rv_\xi)^{k} \colon \sW^{n-k} \to \sW^{n+k}$ is an isomorphism~\cite{BryantGriffithsGrossman2003}*{Proposition~1.1}.
 The conclusion readily follows.
\end{proof}

We say that $\omega \in \sA^k$ is \defn{primitive} if $\mL_\theta^{n+1-k}(\theta \wedge \omega) = 0$ for any choice of local contact form.
The following proposition identifies, in a contact invariant way, the non-primitive part of an arbitrary differential form.

\begin{proposition}
 \label{lefschetz}
 Let $(M^{2n+1},\xi)$ be a contact manifold.
 There is a unique contact invariant linear map $\Gamma \colon \sA^k \to \sA_0^{k-1}$ such that given a local contact form $\theta$ and an $\omega \in \sA^k$, it holds that
 \begin{equation}
  \label{eqn:lefschetz}
  \begin{aligned}
   \theta \wedge \omega \wedge d\theta^{n+1-k} & = \Gamma\omega \wedge d\theta^{n+2-k} , && \text{if $k \leq n$} , \\
   \theta \wedge \omega & = \Gamma\omega \wedge d\theta , && \text{if $k \geq n+1$} .
  \end{aligned}
 \end{equation}
 Moreover,
 \begin{enumerate}
  \item $\Gamma(\sA_0^k) = \{ 0 \}$;
  \item $\Gamma d\Gamma = \Gamma$; and
  \item $\Gamma d = 1$ on $\sA_0^k$, $k \leq n$.
 \end{enumerate}
\end{proposition}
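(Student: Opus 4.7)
My plan is to construct $\Gamma$ explicitly as the inverse of a suitable power of $\mL_\theta$ via \cref{baby-lefschetz}, and then to verify the formula, contact invariance, and the three listed properties directly.

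For $2\leq k\leq n$, the map $\mL_\theta^{n+2-k}\colon\sA_0^{k-1}\to\sA_0^{2n+3-k}$ is an isomorphism by \cref{baby-lefschetz} (with parameter $n+2-k\leq n$); since $\theta\wedge\omega\wedge d\theta^{n+1-k}$ lies in $\sA_0^{2n+3-k}$, I take $\Gamma\omega$ to be its unique preimage, and the first formula holds by construction. For $k\leq 1$ the target $\sA_0^{k-1}$ vanishes and the left hand side of the first formula also vanishes since its degree exceeds $2n+1$, so I set $\Gamma\omega:=0$. For $n+1\leq k\leq 2n$, \cref{baby-lefschetz} with parameter $k-n\leq n$ yields the isomorphism $\mL_\theta^{k-n}\colon\sA_0^{2n-k+1}\to\sA_0^{k+1}$; letting $\alpha$ be the unique preimage of $\theta\wedge\omega$ under this map, I define $\Gamma\omega:=\mL_\theta^{k-n-1}\alpha$, so that $\mL_\theta(\Gamma\omega)=\mL_\theta^{k-n}\alpha=\theta\wedge\omega$ recovers the second formula. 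For $k=2n+1$ the form $\theta\wedge\omega$ vanishes by degree and I set $\Gamma\omega:=0$.

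Contact invariance follows from two observations: iterating \eqref{eqn:lefschetz-transformation} gives $\mL_{e^u\theta}^j=e^{ju}\mL_\theta^j$ on vertical forms; and a direct expansion of $d(e^u\theta)^j=e^{ju}(d\theta+du\wedge\theta)^j$, in which the cross terms containing $\theta\wedge\theta$ vanish, shows that $e^u\theta\wedge\omega\wedge d(e^u\theta)^j=e^{(j+1)u}\theta\wedge\omega\wedge d\theta^j$, while wedging $d(e^u\theta)^{j+1}$ with the vertical form $\Gamma\omega$ also kills every $du\wedge\theta$ contribution. Both sides of each defining equation therefore scale by the same power of $e^u$, so the preimage defining $\Gamma\omega$ is independent of the choice of local contact form. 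Uniqueness is immediate for $k\leq n+1$ from the injectivity of the relevant Lefschetz isomorphism; for $k\geq n+2$, where $\mL_\theta$ acquires a contact-invariant kernel in $\sA_0^{k-1}$, the main subtlety is ruling out nonzero contact-invariant linear maps $\sA^k\to\ker(\mL_\theta)\cap\sA_0^{k-1}$ which could be added to $\Gamma$ without disturbing the formula.

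The three listed properties now follow. Part (i) is immediate: $\omega\in\sA_0^k$ forces $\theta\wedge\omega=0$, which collapses the preimage of $\theta\wedge\omega\wedge d\theta^{n+1-k}$ (resp.\ of $\theta\wedge\omega$) to $0$. For part (iii), if $\omega\in\sA_0^k$ with $k\leq n$, then \cref{dsA} gives $\theta\wedge d\omega=\omega\wedge d\theta$, so wedging with $d\theta^{n-k}$ shows that $\omega$ itself satisfies the defining equation for $\Gamma(d\omega)$; uniqueness then forces $\Gamma d\omega=\omega$. For part (ii), applying \cref{dsA} to $\Gamma\omega\in\sA_0^{k-1}$ gives $\theta\wedge d\Gamma\omega=\Gamma\omega\wedge d\theta$, which combined with the defining equation for $\Gamma\omega$ shows that $d\Gamma\omega$ satisfies the same defining equation as $\omega$; uniqueness again yields $\Gamma d\Gamma\omega=\Gamma\omega$. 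The principal obstacle in the whole proof is the uniqueness claim in the range $k\geq n+2$.
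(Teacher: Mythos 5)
Your construction and verification coincide with the paper's proof: both define $\Gamma_\theta\omega$ as the preimage of $\theta\wedge\omega\wedge d\theta^{n+1-k}$ (resp.\ $\theta\wedge\omega$) under the Lefschetz isomorphism of \cref{baby-lefschetz}, both deduce independence of $\theta$ from \eqref{eqn:lefschetz-transformation}, and both derive (i)--(iii) from \cref{dsA} exactly as you do. You are in fact slightly more careful than the paper about the degenerate degrees $k\leq 1$ and $k=2n+1$, and all of those details check out.

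The one place your write-up is incomplete is the point you yourself flag: you end by naming the uniqueness of $\Gamma$ in the range $k\geq n+2$ as "the principal obstacle" rather than resolving it, so as submitted the proof of the uniqueness clause has a gap. You are right that this is a genuine subtlety: for $k\geq n+2$ the single equation $\theta\wedge\omega=\Gamma\omega\wedge d\theta$ determines $\Gamma\omega$ only modulo $\ker(\mL_\theta)\cap\sA_0^{k-1}$, which is a nontrivial (and contact invariant) subbundle, and requiring the equation for every rescaled contact form $e^u\theta$ adds nothing because $\Gamma\omega$ is vertical. The paper does not close this either; its proof constructs the particular solution $\Gamma\omega=\zeta_\theta\wedge d\theta^{k-n-1}$ with $\zeta_\theta$ the \emph{unique} solution of \eqref{eqn:high-zeta}, and then asserts that uniqueness "readily follows." To align with the paper you should therefore present the high-degree case through \eqref{eqn:high-zeta}, where \cref{baby-lefschetz} does give pointwise uniqueness of $\zeta_\theta$, and read the proposition's uniqueness claim as the well-definedness (independence of $\theta$) of this normalized solution, i.e.\ of the solution lying in $\mL_\theta^{k-n-1}(\sA_0^{2n-k+1})$ --- a subspace that meets $\ker\mL_\theta$ trivially by the Lefschetz decomposition. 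Ruling out \emph{arbitrary} contact invariant corrections valued in $\ker\mL_\theta$ would require an additional naturality argument that neither you nor the paper supplies; fortunately, nothing later in the paper uses uniqueness, only existence, contact invariance, and properties (i)--(iii), all of which you establish correctly.
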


\begin{proof}
 We first show the existence, uniqueness, and contact invariance of $\Gamma$.
 
 Let $\theta$ be a local contact form.
 Let $\omega \in \sA^k$.
 If $k \leq n$, then \cref{baby-lefschetz} yields a unique $\zeta_\theta \in \sA_0^{k-1}$ such that
 \begin{equation}
  \label{eqn:low-zeta}
  \theta \wedge \omega \wedge d\theta^{n+1-k} = \zeta_\theta \wedge d\theta^{n+2-k} .
 \end{equation}
 Equation~\eqref{eqn:lefschetz-transformation} implies that $\Gamma_\theta\omega := \zeta_\theta$ is independent of the choice of $\theta$.
 If $k \geq n+1$, then \cref{baby-lefschetz} yields a unique $\zeta_\theta \in \sA_0^{2n-k+1}$ such that
 \begin{equation}
  \label{eqn:high-zeta}
  \theta \wedge \omega = \zeta_\theta \wedge d\theta^{k-n} .
 \end{equation}
 Equation~\eqref{eqn:lefschetz-transformation} implies that $\Gamma_\theta\omega := \zeta_\theta \wedge d\theta^{k-n-1}$ is independent of the choice of $\theta$.
 The existence, uniqueness, and contact invariance of $\Gamma$ readily follow.
 
 We now show the remaining properties of $\Gamma$.
 
 It follows immediately from~\eqref{eqn:low-zeta} and~\eqref{eqn:high-zeta} that $\Gamma(\sA_0^k) = \{ 0 \}$.
 
 Let $\omega \in \sA^k$ and let $\theta$ be a local contact form.
 Suppose first that $k \leq n$.
 Let $\zeta \in \sA_0^{k-1}$ be as in~\eqref{eqn:low-zeta}.
 \Cref{dsA} implies that
 \begin{equation*}
  \theta \wedge d\zeta \wedge d\theta^{n+1-k} = \zeta \wedge d\theta^{n+2-k} .
 \end{equation*}
 Hence $\Gamma d\Gamma = \Gamma$ if $k \leq n$.
 Suppose now that $k \geq n+1$.
 Let $\zeta \in \sA_0^{2n-k+1}$ be as in~\eqref{eqn:high-zeta}.
 \Cref{dsA} implies that
 \begin{equation*}
  \theta \wedge d(\zeta \wedge d\theta^{k-n-1}) = \zeta \wedge d\theta^{k-n} .
 \end{equation*}
 Hence $\Gamma d\Gamma = \Gamma$ if $k \geq n+1$.
 
 Finally, let $\omega \in \sA_0^k$, $k \leq n$.
 \Cref{dsA} implies that
 \begin{equation*}
  \theta \wedge d\omega \wedge d\theta^{n-k} = \omega \wedge d\theta^{n+1-k} .
 \end{equation*}
 Therefore $\Gamma d\omega = \omega$.
\end{proof}

\section{The Rumin algebra}
\label{sec:complex}

Let $(M^{2n+1},\xi)$ be a contact manifold.
Set
\begin{align*}
 \sR^k & := \left\{ \omega \in \sA^k \suchthatcolon \mL^{n+1-k}(\theta \wedge \omega) = 0 , \mL^{n-k}(\theta \wedge d\omega) = 0 \right\} , && \text{if $k \leq n$} , \\
 \sR^k & := \left\{ \omega \in \sA^k \suchthatcolon \theta \wedge \omega = 0 , \theta \wedge d\omega = 0 \right\} , && \text{if $k \geq n+1$} ,
\end{align*}
where $\theta$ is any local contact form.
Equation~\eqref{eqn:lefschetz-transformation} implies that $\sR^k$ is well-defined.
Note that $d(\sR^k) \subseteq \sR^{k+1}$ for all $k \in \bN_0$.
Denote $\sR := \bigoplus_k \sR^k$ and $\sA := \bigoplus_k \sA^k$.

Our main result is that $(\sR,d)$ is a deformation retract of the de Rham complex.

\begin{theorem}
 \label{projection}
 Let $(M^{2n+1},\xi)$ be a contact manifold.
 Then
 \begin{equation}
  \label{eqn:defn-pi}
  \pi\omega := \omega - d\Gamma\omega - \Gamma d\omega
 \end{equation}
 is a homogeneous projection $\pi \colon \sA \to \sR$ of degree zero.
 In particular,
 \begin{equation}
  \label{eqn:homotopy}
  \begin{tikzcd}
   ( \sA, d ) \ar[loop left, "\Gamma"] \ar[r, shift left, "\pi"] & ( \sR, d ) \ar[l, shift left, "i"] 
  \end{tikzcd}
 \end{equation}
 is a deformation retract, where $i \colon \sR \to \sA$ is the inclusion.
\end{theorem}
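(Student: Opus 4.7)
The plan is to verify three ingredients in sequence: first, that $\pi\omega \in \sR$ for every $\omega \in \sA$, so that $\pi$ is a well-defined homogeneous projection $\sA \to \sR$ of degree zero; second, that $\pi$ restricts to the identity on $\sR$; and third, the deformation retract identities. Everything is driven by the interplay between the defining equation~\eqref{eqn:lefschetz} for $\Gamma$ and \cref{dsA}.

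The substantive step is showing $\pi\omega \in \sR^k$ for $\omega \in \sA^k$. Since $\Gamma d\omega \in \sA_0^k$ by \cref{lefschetz}, one has $\theta \wedge \pi\omega = \theta \wedge \omega - \theta \wedge d\Gamma\omega$. Applying \cref{dsA} to the vertical form $\Gamma\omega$ gives $\theta \wedge d\Gamma\omega = \Gamma\omega \wedge d\theta$, while~\eqref{eqn:lefschetz} rewrites $\theta \wedge \omega \wedge d\theta^{n+1-k}$ (respectively $\theta \wedge \omega$, if $k \geq n+1$) as $\Gamma\omega \wedge d\theta^{n+2-k}$ (respectively $\Gamma\omega \wedge d\theta$); the two expressions agree term by term and so give the required primitivity or verticality of $\theta \wedge \pi\omega$. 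The analogous condition on $\theta \wedge d\pi\omega = \theta \wedge d\omega - \theta \wedge d\Gamma d\omega$ works identically, this time pairing \eqref{eqn:lefschetz} applied to $d\omega$ (in its own degree range, including the small boundary case $k = n$ where $d\omega \in \sA^{n+1}$) with \cref{dsA} applied to $\Gamma d\omega \in \sA_0^k$.

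For $\pi|_\sR = \id_\sR$ I need $\Gamma\omega = 0$ and $\Gamma d\omega = 0$ for every $\omega \in \sR$. In the vertical case $k \geq n+1$ both $\omega$ and $d\omega$ lie in $\sA_0^\ast$ by the very definition of $\sR^k$, so both vanish by property~(i) of \cref{lefschetz}. In the primitive case $k \leq n$, the conditions defining $\sR^k$ combined with~\eqref{eqn:lefschetz} read $\Gamma\omega \wedge d\theta^{n+2-k} = 0$ and the analogous identity for $d\omega$; \cref{baby-lefschetz} makes wedging with $d\theta^{n+2-k}$ an isomorphism on $\sA_0^{k-1}$ whenever $k \geq 2$, forcing both terms to vanish, while the cases $k \in \{0,1\}$ are automatic since $\sA_0^{-1} = \sA_0^0 = 0$.

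The remaining assertions are routine. The identity $i\pi = 1_\sA - d\Gamma - \Gamma d$ is just~\eqref{eqn:defn-pi}; the equality $\pi i = 1_\sR$ is the content of the previous paragraph; and $\pi$ is a cochain map because both $\pi d$ and $d\pi$ collapse to $d - d\Gamma d$ after using $d^2 = 0$, while $i$ is tautologically a cochain map. The only real obstacle is the bookkeeping in the first step, which splits along the cases $k \leq n$ versus $k \geq n+1$ and must accommodate the boundary degree $k = n$; once one recognizes that \cref{dsA} exactly compensates~\eqref{eqn:lefschetz}, the cancellations fall out uniformly.
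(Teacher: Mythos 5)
Your argument is correct and follows essentially the same route as the paper: both rest on the defining equation~\eqref{eqn:lefschetz} together with \cref{dsA}. The only difference is one of packaging — the paper compresses the verification by observing that $\omega \in \sR$ if and only if $\Gamma\omega = \Gamma d\omega = 0$ and then invoking \cref{lefschetz}(i)--(ii) to conclude $\Gamma\pi = \Gamma d\pi = 0$ in one line, whereas you unpack those same properties into the explicit wedge-product cancellations (and handle the degree bookkeeping, including the boundary case $k=n$, by hand).
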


\begin{proof}
 Equation~\eqref{eqn:defn-pi} implies that $d\pi = \pi d$.
 Hence it suffices to prove that $\pi$ is a projection.
 
 Let $\omega \in \sA$.
 It is clear from~\eqref{eqn:lefschetz} that $\omega \in \sR$ if and only if $\Gamma\omega = \Gamma d\omega = 0$.
 \Cref{lefschetz} implies that $\Gamma \pi = \Gamma d\pi = 0$.
 Therefore $\pi(\sA) \subseteq \sR$ and $\pi^2 = \pi$.
\end{proof}

Applying \cref{homotopy-transfer} to~\eqref{eqn:homotopy} yields an explicit $C_\infty$-structure on the Rumin complex which is quasi-isomorphic to the de Rham algebra.

\begin{corollary}
 \label{structure}
 Let $(M^{2n+1},\xi)$ be a contact manifold.
 Consider $m_k \colon \sR^{\otimes k} \to \sR$,
 \begin{align*}
  m_1 & = di , \\
  m_2 & = \pi \mu i^{\otimes 2} , \\
  m_3 & = \pi \mu ( \Gamma\mu \otimes 1 - 1 \otimes \Gamma \mu )i^{\otimes 3} , \\
  m_k & = 0 , && \text{if $k \geq 4$} .
 \end{align*}
 Then $(\sR,m)$ is a $C_\infty$-algebra.
 Define $f_k \colon \sR^{\otimes k} \to \sA$ by
 \begin{align*}
  f_1 & = i , \\
  f_2 & = -\Gamma\mu i^{\otimes 2} , \\
  f_k & = 0 , && \text{if $k \geq 3$} .
 \end{align*}
 Then $f \colon (\sR,m) \to (\sA,d,\mu)$ is a $C_\infty$-quasi-isomorphism.
\end{corollary}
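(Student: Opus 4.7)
The plan is to apply \cref{homotopy-transfer} to the deformation retract~\eqref{eqn:homotopy} supplied by \cref{projection}, with $\mu$ the exterior product and $h = \Gamma$. The formulas for $m_1$, $m_2$, and $f_2$ follow immediately: the convention $h\psi_1 = -1_A$ forces $\psi_2 = \mu$, whence $m_2 = \pi\mu i^{\otimes 2}$ and $f_2 = -\Gamma\mu i^{\otimes 2}$. Unwinding the recursion at $n = 3$ expresses $\psi_3$ as the claimed linear combination of $\mu(\Gamma\mu \otimes 1)$ and $\mu(1 \otimes \Gamma\mu)$, so $m_3 = \pi\psi_3 i^{\otimes 3}$ has the stated form.

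The crux of the argument rests on two structural observations about $\Gamma$ and $\sA_0$. First, $\Gamma$ takes values in $\sA_0$ by definition, and \cref{lefschetz}(i) says $\Gamma$ vanishes on $\sA_0$; thus $h\psi_k = 0$ whenever $\psi_k$ takes values in $\sA_0$. Second, $\sA_0$ is \emph{square-zero} under the wedge product: if $\omega_1, \omega_2 \in \sA_0$, then locally $\omega_i = \theta \wedge \tau_i$, so $\omega_1 \wedge \omega_2 = \pm\, \theta \wedge \theta \wedge \tau_1 \wedge \tau_2 = 0$. Together these imply $\psi_3 \in \sA_0$ (each summand has a factor from $\sA_0$, and $\sA_0$ is a wedge-ideal), which yields $f_3 = -h\psi_3 i^{\otimes 3} = 0$.

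To handle $k \geq 4$, I would induct on $k$ to prove $\psi_k = 0$. In the recursion $\psi_k = \sum_{s+t=k}(-1)^{s+1}\mu(h\psi_s \otimes h\psi_t)$, any term with $\max(s,t) \geq 3$ vanishes because $h\psi_s = \Gamma\psi_s = 0$ by the inductive hypothesis (with base case $h\psi_3 = 0$ already established); every remaining term has $s, t \geq 2$, so $h\psi_s$ and $h\psi_t$ both take values in $\sA_0$ and $\mu(h\psi_s \otimes h\psi_t)$ lies in $\sA_0 \wedge \sA_0 = 0$. The base case $k = 4$ reduces to $\psi_4 = -\mu(\Gamma\mu \otimes \Gamma\mu) = 0$. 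Hence $m_k = f_k = 0$ for $k \geq 4$. The only non-routine ingredient is spotting the square-zero property of $\sA_0$; with it in hand the corollary is a direct unwinding of \cref{homotopy-transfer}.
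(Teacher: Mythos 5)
Your proposal is correct and follows essentially the same route as the paper: the paper's proof likewise reduces everything to the identities $\mu(\Gamma\otimes\Gamma)=0$ (your square-zero property of $\sA_0$) and $\Gamma\mu(\Gamma\otimes 1)=\Gamma\mu(1\otimes\Gamma)=0$ (your ideal-plus-annihilation observation), and then invokes \cref{homotopy-transfer}. The only difference is that you spell out the induction killing $\psi_k$ for $k\geq 4$, which the paper leaves implicit.
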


\begin{proof}
 Since $\Gamma(\sA) \subseteq \sA_0$, it holds that $\mu(\Gamma \otimes \Gamma) = 0$ and $\Gamma \mu (\Gamma \otimes 1) = 0$ and $\Gamma \mu ( 1 \otimes \Gamma) = 0$.
 Combining this with \cref{homotopy-transfer} implies that $(\sR,m)$ is a $C_\infty$-algebra and $f \colon (\sR,m) \to (\sA,d,\mu)$ is a $C_\infty$-quasi-isomorphism.
\end{proof}

The $C_\infty$-algebra $(\sR,m)$ of \cref{structure} is the \defn{Rumin algebra}.
Expanding via the Koszul sign rule recovers the algebra constructed by Case~\cite{Case2021rumin}*{Theorem~8.8}.

\section*{Acknowledgements}
I thank Andreas {\v C}ap for directing me to the work of Calderbank and Diemer~\cite{CalderbankDiemer2001}.
I also thank the University of Washington for providing a productive research environment while I was completing this work.
This work was partially supported by the Simons Foundation (Grant \#524601).

\bibliography{bib}
\end{document}